\newtheorem{theorem}{Theorem}
\newtheorem{prop}{Proposition}
\newtheorem{lemma}{Lemma}
\newtheorem{example}{Example}
\begin{document}

\title[Dynamical systems of conflict in terms of  structural measures]
    {Dynamical systems of conflict in terms of  structural measures}

\author{Volodymyr Koshmanenko}
\address{Institute of Mathematics, National Academy of
Sciences of Ukraine, 3 Teresh\-chenkivs'ka, Kyiv, 01601, Ukraine}
\email{koshman63@googlemail.com}

\author{Inga Verygina}
\address{National Technical University of
Ukraine "Kyiv Polytechnic Institute", 37 Prospect Pe\-re\-mo\-gy,
Kyiv, 03056, Ukraine}
\email{veringa@i.ua}

\subjclass[2000]{28A33, 28A80}
\date{28/04/2015;\ \  Revised 19/06/2015}
\dedicatory{This paper is dedicated to Yu. M. Berezansky on his
90th birthday}
\keywords{Dynamical system of conflict, probability
measure, self-similar measure, similar structure measure, fixed
point, equilibrium state, Hahn--Jordan decomposition,
rough structural approximation, controlled redistribution.}

\begin{abstract}
We investigate the dynamical systems  modeling  conflict processes
between a pair of opponents. We assume that opponents are given on
a common space by distributions (probability measures) having the
similar or self-similar structure. Our main result states the
existence
  of the controlled conflict in which one of the opponents occupies almost whole conflicting
  space. Besides, we compare   conflicting effects
stipulated  by the rough structural approximation under
 controlled redistributions of starting measures.
\end{abstract}

\maketitle

\section{Introduction}
We study the emergence of space redistributions arising due to the
conflict interaction between opponents (opposite sides, players). The
roles of opponents may be played by various natural entities with
alternative trends (for examples see \cite{BKS,CVNW,Ep,KarKosh08}). In
turn, the conflicting space may appear as a territory, a living
resource, an ordering queue, in general, any value admitting division
(see \cite{AKS,Ch,H,KS(Eng11),Mu}).

We begin with an observation that opponents (competing sides) of
conflict processes usually are presented in a form of a similar or
self-similar structure: cells, bacterias, trees, peoples, etc.
That is why the description of opponents in many-dimensional terms is
a more adequate in comparison with single-meaning values of their
mutual powers. By this reason we propose in constructions of the
conflict theory to use the non-deterministic ideology of Quantum
Mechanics.  In particular, we will describe the states of opponents
in terms of distributions of probability measures.  Furthermore, we
will assume that these distributions have a certain similar or
self-similar geometrical structure consistent with a preassigned
division of the conflicting space.

Let us explain our approach in more details (see also
\cite{ABodK,BKK,BKS,KarKosh08}--\cite{KD},\cite{KoIF09}).

Denote by $A, B$ a couple of opponents (alternative sides) living on a
common resource space $\Omega$. In what follows $\Omega$ is a compact
of some metric space with the Borel $\sigma$-algebra $\mathcal{B}$ of
its subsets. Let $\lambda$ be a fixed $\sigma$-additive measure on
$\mathcal{B}$ such that $\lambda(\Omega)=1$.  In the simplest case one
can think that $\Omega=[0,1]$ and $\lambda$ denotes the usual Lebesgue
measure.

We denote by ${\mathcal M}(\Omega)$ a family of all $\sigma$-additive
finite signed measures on $\mathcal{B}$.  The subset of positive
measures is denoted by ${\mathcal M}^+(\Omega)$.  For a probability
measure $\mu $ we write $\mu \in {\mathcal M}_1^+(\Omega)$.

We suppose that opponents $A, B$ at the initial moment of time are
presented on $\Omega$ by a couple of different probability measures
$\mu, \nu \in {\mathcal M}_1^+(\Omega)$. The conflict interaction
between $A, B$ is represented by a discrete (or continuous) binary
mapping $\divideontimes$ in ${\mathcal M}_1^+(\Omega)$:
$$\mu\divideontimes\nu=\mu', \quad\nu\divideontimes\mu=\nu',$$
which usually is non-commutative and nonlinear.  We call each triple
$\{\Omega, {\mathcal M}_1^+(\Omega), \divideontimes \}$ the {\it
  dynamical system of conflict} (DSC).

The main problem of  the conflict theory is  to study and  describe
the behavior of trajectories
  of  DSC in  terms of couples of probability measures:
\begin{equation}\label{1tr}
  \left\{ \begin{array} {ll} \mu \\
           \nu \end{array}
   \right\}\stackrel{\divideontimes, t}{\longrightarrow }
\left\{ \begin{array} {ll} \mu(t) \\
           \nu(t) \end{array}
   \right\}, \quad  \mu, \nu\in  {\cal M}^+_1, \quad t\geq 0.
\end{equation}
We suppose that the time evolution of DSC in the general case is
governed by the system of differential equations
\begin{equation}\label{1deq} \frac{d \mu}{dt} =\mu \Theta- \tau, \quad \frac{d \nu}{dt}  =\nu \Theta- \tau, \end{equation}
where the conflict exponent $\Theta=\Theta(\mu,\nu)$ is a positive
quadratic form which describes the power of conflict interaction at
whole, and the measure-valued function $\tau=\tau(\mu,\nu)$
corresponds to the local confrontation between $A,B$. In what follows
we come to the discrete time $t= N=0,1,\ldots $ and use the system of
difference equations
\begin{equation}\label{1discrt}
 \left\{ \begin{array} {ll}  \mu^{N+1}(E)=\mu^{N}(E)+\mu^{N}(E)\Theta^{N} - \tau^{N}(E), \\
 \nu^{N+1}(E) = \nu^{N}(E) +\nu^{N}(E) \Theta^{N} - \tau^{N}(E),  \quad   E\in {\cal B}, \end{array}
 \right.
\end{equation}
where  we  omit   normalization denominators.

In \cite{Ko14} it was proved (see also \cite{KoTC,KoTC1,KoPe15}) that
each trajectory $\{\mu^N, \nu^N \}, N\geq 1$ starting with any couple
of probability measures $\mu, \nu \in {\mathcal M}_1^+(\Omega), \
\mu\neq \nu$, converges in the weak sense to a limit fixed point
$\{\mu^\infty, \nu^\infty\}$. This point creates an equilibrium state
for the system and is a compromise in the sense that $\mu^\infty \perp
\nu^\infty$. Moreover, for each dynamical system of conflict
$\{\Omega, {\mathcal M}_1^+(\Omega), \divideontimes \}$ given by
(\ref{1discrt}), there exists the limit $\omega$-set $\Gamma^\infty$
\cite{SMR}. It is an attractor consisting of all couples of mutually
singular measures from ${\mathcal M}_1^+(\Omega)$. Thus,
$$\Gamma^\infty= \{  \{\mu^\infty, \nu^\infty\} \ | \ \mu^\infty, \nu^\infty \in
{\mathcal M}_1^+(\Omega), \ \mu^\infty \perp \nu^\infty  \}.$$  It was proved also
 that each limit
state is  uniquely determined  by the starting couple  $\{\mu,
\nu\}$, and moreover,
$$\mu^\infty=\mu_+, \quad \nu^\infty=\nu_-,$$
where $\mu_+, \nu_-$ denote the    normalized components of a classic Hanh--Jordan
decomposition \cite{BUS,DSH,ShGu}
 of the signed measure $\omega=\mu-\nu=\omega_+-\omega_-$. Thus
\begin{equation}\label{mnpl}
\mu^\infty = \frac{\omega_+}{\omega_+(\Omega)}=: \mu_+, \quad
 \nu^\infty = \frac{\omega_-}{\omega_-(\Omega)}=: \nu_-.\end{equation}

In the simplest situation  the  dynamical system  of conflict can
 be written in terms of coordinates of  stochastic vectors
$p,r\in {\mathbb R}^n_+, n \geq 2$  corresponding to  opponent
sides,
$$ p_i^{N+1}=1/z^N(p_i^N \Theta^N- \tau_i^N), \quad  r_i^{N+1}=1/z^N(r_i^N \Theta^N- \tau_i^N), \quad  i=1,\ldots ,n.$$
Here we set $\Theta^N=(p^N,r^N)$ to be the inner product between the
vectors $p^N,r^N$ and $\tau_i^N=p_i^Nr_i^N$.  It was proved in
\cite{KoTC,KoTC1} that each trajectory $\{p^N, r^N\}_{N=0}^\infty$
starting with a couple of stochastic vectors $\{p^0=p, r^0=r\}, \
p\neq r$ converges with $N\to \infty$ to a fixed point $\{p^\infty,
r^\infty\}$ which creates a compromise state, $p^\infty \perp
r^\infty$.  This state is uniquely determined by the starting couple
$\{p, r\}$ and has an explicit coordinate representation
\begin{equation}\label{lfstv}
\begin{gathered}
p^\infty_i=\frac{d_i}{D}>0, \quad  i\in N_+, \quad r^\infty_k=-\frac{d_k}{D}>0, \quad  k\in N_-,
\quad
D=1/2\sum_{i=1}^n |d_i|,
\\
 p^\infty_i=0, \quad  i\notin N_+,\quad r^\infty_k=0, \quad  k\notin N_-,
\end{gathered}
\end{equation}
where $d_i=p_i-r_i$ and $N_+=\{i : d_i>0 \}$, \ $N_-=\{k :
d_k<0\}$. In \cite{KarKosh08,KD,Ko07,KS(Eng11)} we ge\-ne\-ralized the
above constructions to cases of piece-wise uniformly distributed
measures, self-similar, and similar structure measures.

In the present paper we study more specific questions connected with
the property of similar structure measures $\mu,\nu$ to have the weak
approximation in terms of piece-wise uniformly distributed measures
$$\mu=\lim_{k\to \infty}\mu_k, \quad \nu=\lim_{k\to \infty}\nu_k.$$
We analyze the effects which are produced by the rough structural
approximation and controlled redistribution (see below) of the
starting measures.  In other words we are interesting in all possible
spatial and valued changes of the limit measures when $\mu_k,\nu_k$
are subject to the structural redistributions, $\mu_k \to \tilde
\mu_k, \nu_k \to \tilde \nu_k$, so that the limits $${\tilde
  \mu}^\infty=\lim_{k\to \infty}{\tilde \mu}_k^\infty,\quad {\tilde
  \nu}^\infty= \lim_{k\to \infty}{\tilde \nu}_k^\infty$$ become
essentially different from $\mu^\infty, \nu^\infty$.

Our main result (it is hypothetical in the general case)  reads as follows.

 Given   a couple of similar structure measures
 $\mu,\nu \in {\cal M}^{\rm ss}(\Omega)$ with $
{\rm supp} \mu=\Omega={\rm supp} \nu$, for any $0<\varepsilon <1 $
 there exists a  controlled structural redistribution of the measure $\mu$ such
 that on $ 1\leq k < \infty$ step of the rough approximation,  $\mu_k \to \tilde \mu_k, $
 The limit conflict state $\{ \tilde \mu_{k}^\infty, \nu_{k}^\infty \}$ obeys the
 properties
 $$\lambda({\rm supp} {\tilde \mu}_{k}^\infty) \geq 1-\varepsilon,
 \quad
 {\tilde \mu}_{k}^\infty \perp \nu_{k}^\infty.$$

 It means that the controlled conflict under an appropriate strategy
 may lead to an expansion over most part of the territory.

 In the paper we prove only a simplest version of this observation
 (see Theorem \ref{6thg}).

 The paper consists of five sections. In Section 2 we briefly recall a
 general picture of DSC in terms of probability measures, in Section 3
 the notions of similar and self-similar structure measures are
 presented, Section 4 contains the main results, and finally, in the
 last section we discuss the interpretation of the obtained results
 and their possible applications.

\section{On dynamical systems of conflict  in terms of  abstract measures}

Let us recall a general scheme of our approach to the conflict theory
in terms of abstract measures (for more details, see
\cite{Ko14,KoPe15}). We will deal with dynamical systems $\{\Omega,
{\mathcal M}_{1, {\rm ac}}^+(\Omega), \divideontimes \}$ of natural
conflict (here ${\mathcal M}_{1, {\rm ac}}^+$ denotes a class of
absolutely continuous measures). The term "natural"  means that a
conflict composition $\divideontimes$ is defined by some fixed law of
the conflict interaction between opponents and their strategies do not
change during the time evolution.  In Sec. 4 we will discuss the
dynamical systems with rough approximations and controlled
redistributions when $\divideontimes$ is subjected to the strategical
changes.

Let us consider an abstract variant of DSC with the discrete time
$$\mu^{N+1}=\mu^N\divideontimes\nu^N , \quad \nu^{N+1}=\nu^N \divideontimes \mu^N, \quad N=0,1,\ldots $$
Their  state trajectories
\begin{equation}\label{DSK}
  \left\{ \begin{array} {ll} \mu^N \\
           \nu^N \end{array}
   \right\}\stackrel{\divideontimes}{\longrightarrow }
\left\{ \begin{array} {ll} \mu^{N+1} \\
           \nu^{N+1} \end{array}
   \right\}, \quad N=0,1,\ldots
\end{equation}
are governed by
the following {\it  law of conflict dynamic}:
 \begin{equation}\label{2drdsc}
  \left\{ \begin{array} {ll}  \mu^{N+1}(E) = \frac{1}{z^N}[\mu^N(E)
\big( \Theta^{N}+1 \big) - \tau^{N}(E)], \\
\nu^{N+1}(E) = \frac{1}{z^N}[\nu^{N}(E)
\big(\Theta^{N}+1\big) - \tau^{N}(E)], \quad  E\in {\mathcal B}, \end{array}
\right.
\end{equation}
where the measures $ \mu^0=\mu, \ \nu^0=\nu$ correspond to the initial
state.  The conflict exponent $\Theta^N$ in (\ref{2drdsc}) is defined
as
$$\label{thN}\Theta^N=\int_\Omega\int_\Omega {\cal K}(x,y)\varphi^N(x)\psi^N(y)\,dxdy,$$
where ${\cal K}(x,y)$ denotes the kernel of some positive bounded
operator $K$ in $L^2(\Omega, \ d\lambda)$
and $$\varphi^N(x)=\sqrt{\rho^N(x)}, \quad
\psi^N(x)=\sqrt{\sigma^N(x)},$$ where $\rho^N(x), \sigma^N(x)$ are the
Radon-Nikodym derivatives of $\mu^N,\nu^N$ with respect to $\lambda$.
Thus,
$$\label{thN}\Theta_N=(K\varphi^N,\psi^N)_{L^2(\Omega, \ d\lambda)},$$
Further, $\tau^N$ in (\ref{2drdsc}) stands for the occupation measure.
Its values characterize the presence of opponents on opposite
territories. By definition,
\begin{equation}\label{tauN} \tau^N(E) = \nu^{N}(E_+)+\mu^{N}(E_-), \quad E_+=E\bigcap\Omega_+,
  \quad E_-=E\bigcap\Omega_-,\end{equation}
where $\Omega=\Omega_-\bigcup \Omega_+$ corresponds to the Hahn--Jordan decomposition
(see \cite{DSH,ShGu}) of the
starting signed measure $\omega=\mu-\nu$. Finally, the normalizing denominator in
(\ref{2drdsc}) is defined as
 $$z^N=\Theta^N+1 -W^N, \quad  W^N=\mu^N(\Omega_-) + \nu^N(\Omega_+).$$
 It is easy to see that all measures  $\mu^N, \nu^N, \  N\geq 1$ in  (\ref{2drdsc})  are
 absolutely continuous and  probability, i.e., $\mu^{N}, \nu^{N} \in {\mathcal M}^+_{1, {\rm
 ac}}(\Omega)$.

 The DSC  defined by
(\ref{2drdsc}) has two separate  sets  of  fixed points. The first set contains all couples  of
 identical  measures from ${\mathcal M}^+_{1, {\rm ac}}(\Omega)$. Indeed, if $\mu=\nu$, then
  $\Theta^N$ is a constant for all $N$ and $\mu^N(E)=\mu(E)=\nu^N(E)=\nu(E)$ for each $ E\in {\mathcal B}$.
   The second set is composed of measures $\mu,\nu\in {\mathcal M}^+_{1, {\rm ac}}(\Omega)$
   which are orthogonal,
$\mu\perp \nu.$ In this  case
$\tau^N=0=W^N$ and $\Theta^N+1=z^N$   for all $N$. Due to (\ref{2drdsc}) we  find that
  $\mu^N=\mu, \ \nu^N=\nu$.

  In all other cases, when the starting measures are different,
  $\mu\neq\nu$, and mutually non-singular the following theorem is
  true.

\begin{theorem}\label{3thCnlt} 
  Let $\{\Omega, {\mathcal M}_{1, {\rm ac}}^+(\Omega), \divideontimes
  \}$ be a DSC generated by the system of difference equations
  (\ref{2drdsc}). Then each its trajectory (\ref{DSK}) starting with a
  couple of probability measures $\mu^0=\mu, \nu^0=\nu \in {\mathcal
    M}^+_{1, {\rm ac}}(\Omega),$ \ \ $\mu\neq\nu$ \ converges to a
  fixed point corresponding to a limit state $\{ \mu^\infty,
  \nu^\infty \}$ with
   \begin{equation}
\mu^\infty(E) = \lim_{N\to \infty}\mu^N(E), \quad \nu^\infty (E) = \lim_{N\to \infty}\nu^N (E), \quad E\in {\mathcal B},
\end{equation}
 where
\begin{equation}\label{mr}
\mu^\infty(E) =\frac{\mu(E_+)-\nu(E_+)}{D}= \mu_+(E), \quad \nu^\infty(E) =- \frac{\mu(E_-)-
\nu(E_-)}{D}= \nu_-(E)
\end{equation}
with $\mu_+, \nu_-$ defined by (\ref{mnpl}).
\end{theorem}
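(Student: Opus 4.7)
The strategy is to track two quantities along the trajectory: the signed measure $\omega^N := \mu^N - \nu^N$, which I will show is always a positive scalar multiple of $\omega := \mu-\nu$, and the ``misplacement mass'' $W^N := \mu^N(\Omega_-) + \nu^N(\Omega_+)$, which I will show decays geometrically to $0$. These two facts together force mutual singularity of the limits and pin them down via the Hahn--Jordan decomposition of $\omega$.

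Subtracting the two equations in (\ref{2drdsc}) cancels the common $\tau^N$ and yields $\omega^{N+1}(E) = \frac{\Theta^N+1}{z^N}\omega^N(E)$, so by induction $\omega^N = c^N\omega$ with $c^N>0$; in particular the Hahn--Jordan sets $\Omega_\pm$ are preserved throughout the evolution. Using $\tau^N(E_+)=\nu^N(E_+)$ and $\tau^N(E_-)=\mu^N(E_-)$, the recursion restricted to subsets of $\Omega_\pm$ collapses to $\mu^{N+1}(E_-) = (\Theta^N/z^N)\mu^N(E_-)$ and $\nu^{N+1}(E_+) = (\Theta^N/z^N)\nu^N(E_+)$, whence
$$W^{N+1} \;=\; \frac{\Theta^N}{z^N}\,W^N.$$

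To establish decay of $W^N$ to $0$, I plan to use two elementary facts: (i) $\Theta^N=(K\varphi^N,\psi^N)_{L^2}\le\|K\|$ uniformly in $N$, since $\varphi^N,\psi^N$ are square roots of probability densities and hence unit vectors in $L^2(\Omega,d\lambda)$; and (ii) $W^0 = \mu(\Omega_-)+\nu(\Omega_+) = 1-2D$, where $D:=\omega_+(\Omega) = \omega_-(\Omega) > 0$ (the equality from $\omega(\Omega)=0$, the positivity from $\mu\neq\nu$). By induction $W^N\le W^0<1$, hence $z^N = \Theta^N+1-W^N \ge 2D > 0$, and
$$\frac{\Theta^N}{z^N} \;=\; 1-\frac{1-W^N}{\Theta^N+1-W^N} \;\leq\; 1-\frac{2D}{\|K\|+1} \;=:\; q \;<\; 1,$$
so $W^N\leq q^N W^0 \to 0$. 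A short case-check also confirms that $\mu^N,\nu^N$ stay positive and have total mass $1$ at every step.

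Finally I read off the limits. Evaluating $\omega^N = c^N\omega$ on $\Omega_+$ gives $c^N D = \mu^N(\Omega_+)-\nu^N(\Omega_+) = 1-W^N$, so $c^N\to 1/D$. For $E\subseteq\Omega_+$ we have $\nu^N(E)\le\nu^N(\Omega_+)\le W^N\to 0$, hence $\mu^N(E) = \nu^N(E) + c^N\omega_+(E) \to \omega_+(E)/D = \mu_+(E)$; combined with $\mu^N(\Omega_-)\to 0$, this gives $\mu^N(E)\to\mu_+(E)$ for every Borel $E$, and symmetrically $\nu^N(E)\to\nu_-(E)$. This delivers both $\mu^\infty\perp\nu^\infty$ and the explicit formulas (\ref{mr}). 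The only non-algebraic ingredient --- and the place I expect to be the main obstacle --- is the uniform bound $\Theta^N\le\|K\|$ combined with the lower bound $1-W^N\ge 2D$, since these are exactly what force the contraction ratio $q$ to be strictly less than $1$; without boundedness of $K$ one could only conclude that $W^N$ is monotone decreasing, with no guarantee of convergence to zero.
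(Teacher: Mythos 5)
The paper does not prove Theorem \ref{3thCnlt} at all: it explicitly defers to \cite{KoTC,KoTC1,Ko14}, so there is no in-paper argument to compare yours against. Taken on its own, your proof is correct in structure and essentially complete. The three pillars all check out: subtracting the two recursions in (\ref{2drdsc}) does cancel $\tau^N$ and gives $\omega^{N+1}=\frac{\Theta^N+1}{z^N}\,\omega^N$, so the Hahn--Jordan sets of the initial $\omega$ persist; restricting to $\Omega_\pm$ does collapse the recursion to $W^{N+1}=\frac{\Theta^N}{z^N}W^N$; and the bound $\Theta^N=(K\varphi^N,\psi^N)\le\|K\|$ (valid because $\varphi^N,\psi^N$ are unit vectors in $L^2(\Omega,d\lambda)$) is exactly what upgrades monotonicity of $W^N$ to geometric decay. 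The one error is arithmetic: $W^0=\mu(\Omega_-)+\nu(\Omega_+)=1-D$, not $1-2D$. Indeed, $\mu(\Omega_-)=\nu(\Omega_-)-D$ and $\nu(\Omega_+)=\mu(\Omega_+)-D$, so $W^0=\bigl(1-\nu(\Omega_+)\bigr)+\bigl(1-\mu(\Omega_-)\bigr)-2D=2-W^0-2D$, whence $W^0=1-D$. This is harmless: you still get $1-W^N\ge 1-W^0=D>0$, hence $z^N\ge D>0$ and the contraction ratio becomes $q=1-\frac{D}{\|K\|+1}<1$ rather than $1-\frac{2D}{\|K\|+1}$, and every subsequent step (in particular $c^N D=1-W^N\to 1$, so $c^N\to 1/D$ and $\mu^N(E)\to\omega_+(E)/D=\mu_+(E)$) goes through unchanged and delivers (\ref{mr}) together with $\mu^\infty\perp\nu^\infty$. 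You may wish to add one sentence verifying that $\mu^{N+1}\ge 0$ on subsets of $\Omega_+$ (it follows from $\mu^N(E)\ge\nu^N(E)$ there), but you already flag this check.
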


In (\ref{mr})   $D=1/2\int_\Omega |\rho(x)-\sigma(x)|dx$ stands for the total difference
 between measures $\mu,\nu$.

 If $\Omega$ is a finite set and $\mu, \nu$ are stochastic vectors
 $p,r\in {\mathbb R}^n_{+,1}$, then coordinates of the limit states
 $\{p^\infty, r^\infty\}$ are described by (\ref{lfstv}).  We refer to
 \cite{KoTC,KoTC1,Ko14}) for the proof of this theorem.

\section{The similar and self-simillar structure measures}
Let   $\Omega$ be a compact set of some metric space and let  $\lambda$  be a fixed $\sigma$-additive
measure on the Borel algebra of subsets. We suppose that $\lambda(\Omega)=1$.
Consider a specific class of measures as follows.

Fix $n>1$, assume $\Omega$ is consecutively divided onto non-empty
subsets (regions) on each $k$th level
\begin{equation}\label{1dq}
\Omega=\bigcup\limits_{{i_1}=1}^{n}\Omega_{{i_1}}
=\bigcup\limits_{{i_1},{i_2}=1}^{n}\Omega_{{i_1}{i_2}}
=\bigcup\limits_{{i_1},{i_2}, \ldots ,{i_k}=1}^{n}\Omega_{{i_1}{i_2}\ldots {i_k}}=\cdots,
\quad k=1,2,\ldots
\end{equation} such that all ratios
\begin{equation}\label{1raq}
q_{k{i_k}}:=\frac{\lambda(\Omega_{{i_1}{i_2}\ldots {i_k}})}{\lambda(\Omega_{{i_1}{i_2}\ldots {i_{k-1}}})}
, \quad  k\geq 1
\end{equation}
are independent of indices ${i_1},{i_2},{\ldots },{i_{k-1}},$ where $
\Omega_{i_0}=\Omega$.  Besides we suppose
that $$\inf\limits_{k,i_k}\{q_{ki_k}\}>0 \quad {\rm and} \quad
\sum\limits_{{i_k}=1}^{n}{q_{k{i_k}}}=1.$$ In what follows we fix some
division of $\Omega$ with above properties.

 We say that a probability measure $\mu$ from ${\cal M}^+_1(\Omega)$ belongs to the
 class of {\it similar structure measures}, \  write $\mu \in {\cal M}^{\rm ss}(\Omega)$,
  if, besides (\ref{1raq}),   all ratios
\begin{equation}\label{1rap} p_{k{i_k}}:=\frac{\mu(\Omega_{i_1\ldots i_k})}{\mu(\Omega_{i_1\ldots i_{k-1}})},
  \quad  k \geq 1 \end{equation}
with $\mu(\Omega_{i_1\ldots i_{k-1})}\neq 0$ are  independent of the indices
${i_1},{i_2},{\ldots },{i_{k-1}}.$  We recall that $\mu(\Omega_{i_0})=\mu(\Omega)=1$.
By this definition, $$ p_{k{i_k}}\geq0, \quad
\sum\limits_{{i_k}=1}^{n}{p_{k{i_k}}}=1, \quad k\geq 1. $$
In the particular case, where $q_{ki_k}$  and $p_{ki_k}$ in (\ref{1raq}) and (\ref{1rap})
do not depend on $k$, we say that the measure $\mu$ has the {\it self-similar
  structure},
\ write $\mu\in {\cal M}^{\rm sss}(\Omega)$.
Clearly ${\cal M}^{\rm sss}(\Omega)\subset {\cal M}^{\rm ss}(\Omega)$.

We say that a measure $\mu$ has the {\it partly similar structure},
write $\mu \in {\cal M}^{\rm pss}(\Omega)$, if conditions (\ref{1rap})
hold only up to some finite $k< \infty$. Inside the subsets
$\Omega_{i_1\ldots i_{k}}$, these measures have arbitrary
distributions.

\begin{lemma}\label{2asm} Each similar structure measure $\mu \in
  {\cal M}^{\rm ss}(\Omega)$
 is uniquely associated with a matrix
$$P=\{p_k\}_{k=1}^\infty=\{ p_{k i_k}\}_{i_k=1, k=1}^{n,
  \infty},\quad p_k\in {\mathbb R}_{+,1}^{n}, $$ such that all vectors
$p_k=(p_{k 1}, p_{k 2},\ldots ,p_{k n}), \ k\geq 1$ are stochastic,
i.e., $$ p_{k i_k} \geq 0,\quad p_{k 1}+p_{k 2}+\cdots +p_{k n}=1.$$
In particular, if there exists $k_0$ such that all $p_k, \ k\geq k_0$,
coincide with some fixed vector $p\in {\mathbb R}_{+,1}^{n}$, then
$\mu \in {\cal M}^{\rm sss}(\Omega)$.

 Each  partly similar structural measure  $\mu \in {\cal M}^{\rm pss}(\Omega)$ is uniquely
 associated with the finite
 matrix $$P_{k_0}=\{p_k\}_{k=1}^{k_0}=\{ p_{k i_k}\}_{i_k=1, k=1}^{n,
 k_0}, \quad  p_k\in {\mathbb R}_{+,1}^{n},\quad  k_0 < \infty.$$
 \end{lemma}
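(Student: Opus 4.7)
The plan is to read the entries of $P$ directly off the ratios in definition (\ref{1rap}) and then check stochasticity, uniqueness, and the two specialisations in turn.

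First I would fix $k \geq 1$ and $i_k \in \{1,\ldots,n\}$ and set $p_{k i_k} := \mu(\Omega_{i_1\ldots i_k})/\mu(\Omega_{i_1\ldots i_{k-1}})$ using any prefix $(i_1,\ldots,i_{k-1})$ for which the denominator is nonzero; hypothesis (\ref{1rap}) is exactly the statement that this number is independent of the prefix, so $p_{k i_k}$ is well defined. (If the denominator vanished for every prefix of length $k-1$, countable additivity of $\mu$ together with the decomposition (\ref{1dq}) would force $\mu(\Omega)=0$, contradicting $\mu(\Omega)=1$.) Non-negativity of $p_{k i_k}$ is inherited from $\mu \geq 0$, and $\sum_{i_k=1}^n p_{k i_k}=1$ follows by summing the defining ratio over $i_k$ and using the disjoint decomposition $\Omega_{i_1\ldots i_{k-1}}=\bigsqcup_{i_k=1}^{n}\Omega_{i_1\ldots i_k}$. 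This produces the stochastic matrix $P=\{p_{k i_k}\}$ attached to $\mu$.

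Next I would show that the map $\mu \mapsto P$ is injective. A straightforward induction on $k$, starting from $\mu(\Omega)=1$, gives $\mu(\Omega_{i_1\ldots i_k}) = \prod_{j=1}^{k} p_{j i_j}$, so $P$ determines $\mu$ on every cell of the nested partition (\ref{1dq}). Since these cells form a semi-algebra generating $\mathcal{B}$, Carath\'eodory's extension theorem forces $\mu$ to be determined by $P$ on all of $\mathcal{B}$.

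Finally, the two specialisations are immediate consequences of the construction: if $p_k=p$ for every $k\geq k_0$, the ratios defining $\mu$ eventually stabilise and the measure lies in ${\cal M}^{\rm sss}(\Omega)$; for a partly similar measure the same construction runs only for $k\leq k_0$, producing the finite matrix $P_{k_0}$. I do not expect a serious obstacle — apart from the minor bookkeeping of prefixes on which $\mu$ vanishes (so that the ratio stays meaningful and the inductive product formula closes up), the lemma is an unpacking of definitions combined with one invocation of the standard extension theorem.
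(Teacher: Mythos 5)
Your argument is correct, but it runs in the opposite direction from the paper's proof. You read the matrix off the measure: the independence hypothesis in (\ref{1rap}) makes $p_{ki_k}$ well defined, summing over $i_k$ against the disjoint decomposition gives stochasticity, and the product formula $\mu(\Omega_{i_1\ldots i_k})=\prod_{j=1}^k p_{ji_j}$ shows $P$ determines $\mu$ on every cell. The paper instead proves the converse construction: given a stochastic matrix $P$ it defines piece-wise uniformly distributed approximants by $\mu_k(\Omega_{i_1\ldots i_l})=p_{1i_1}\cdots p_{li_l}$, spread uniformly inside the finest cells, sets $\mu(E)=\lim_{k\to\infty}\mu_k(E)$, and verifies that the limit lies in ${\cal M}^{\rm ss}(\Omega)$; the direction you prove is dispatched there in a single closing sentence. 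Your route is the more literal reading of ``uniquely associated'' and buys injectivity of $\mu\mapsto P$; the paper's route buys surjectivity (every stochastic matrix is realized by a similar structure measure) and produces the approximants $\mu_k$ that Section 4 actually works with. One caveat on your Carath\'eodory step: the values of $\mu$ on the cells determine $\mu$ on all of $\mathcal{B}$ only if the nested partition (\ref{1dq}) generates $\mathcal{B}$ (for instance, if the cell diameters tend to zero), an assumption the paper uses implicitly but never states; the class ${\cal M}^{\rm pss}(\Omega)$, where arbitrary distributions are permitted inside the finest cells, shows that without it a measure is genuinely not pinned down by its cell values. This does not affect the lemma's literal claim, for which your first two paragraphs already suffice.
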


\begin{proof}
  Given $P$ define the measure $\mu_k$ for each $1\leq k<\infty$ as
  follows: $$\mu_k(\Omega_{{i_1}\ldots {i_l}}):=p_{1{i_1}}\ldots
  p_{l{i_l}}\equiv p_{i_1\ldots i_l}, \quad 1\leq l \leq k,$$ and put
  $\mu_k$ uniformly distributed inside subsets $\Omega_{{i_1} \ldots
    {i_k}}$. By this construction,
$$\frac{\mu_k(\Omega_{i_1\ldots i_{l}})}{\mu_k(\Omega_{i_1\ldots
i_{l-1}})}=\frac{ p_{i_1\ldots i_{l}}}{ p_{i_1 \ldots
i_{l-1}}}=p_{l{i_l}}, \quad  1 \leq l \leq k$$ and therefore
$\mu_k \in {\cal M}^{\rm pss}(\Omega)$. Thus, $\mu_k$ is a
piece-wise uniformly distributed measure and one can write for any
Borel set $E\in {\cal B}$: $$\mu_k(E)=\sum\limits_{{i_1}, \ldots
,{i_k}=1}^{n} \rho_{{i_1}\ldots {i_k}} \lambda_{{i_1}\ldots
{i_k}}(E),
 \ $$
where $$\rho_{{i_1}\ldots {i_k}}:= \frac{p_{i_1\ldots i_k}}{q_{i_1
\ldots i_k}},\quad
 q_{i_1\ldots i_k}:=\lambda(\Omega_{{i_1}\ldots {i_k}})=q_{1{i_1}}\ldots q_{k{i_k}},
 \quad  \lambda_{{i_1}\ldots {i_k}}=\lambda \upharpoonright  \Omega_{{i_1}\ldots {i_k}}.$$
We put $\mu(\Omega_{{i_1}\ldots {i_l}}):=\mu_k(\Omega_{{i_1}
\ldots {i_l}}), \  1 \leq l \leq k$ and for an arbitrary $E$
define $\mu$ by the weak limit  $$\mu(E)=\lim_{k\to
\infty}\mu_k(E), \quad  E\in {\cal B}.$$
 By this construction,
 $$\mu(\Omega_{{i_1}\ldots {i_{k-1}}})=\mu_k(\Omega_{{i_1}\ldots {i_{k-1}}})
 =\sum_{i_k=1}^n \mu_k(\Omega_{{i_1}\ldots {i_{k}}})=p_{i_1\ldots i_{i-1}}\sum_{i_k=1}^n p_{ki_k}, \quad  k\geq 1$$
 and therefore $\mu\in {\cal M}^{\rm ss}(\Omega)$.

Vice versa, if $\mu \in {\cal M}^{\rm ss}(\Omega)$, then
elements  $p_{k,i_k}$ (see (\ref{1rap})) define  the matrix $P$
with above properties.
\end{proof}

In what follows we will consider a couple of measures $\mu, \nu \in
{\cal M}^{\rm ss}(\Omega)$ associated with the matrices
\begin{equation}\label{PR} P=\{p_k \}_{k=1}^\infty =\{ p_{k i_k}\}_{k,i_k=1}^{ \infty, n}
\quad {\rm and} \quad R=\{r_k \}_{k=1}^\infty =\{ r_{k
i_k}\}_{k,i_k=1}^{ \infty, n}.  \end{equation}

In the case where $\Omega=[0,1]$, the above measures $\mu_k$ have
densities $\rho_k(x)$ which are the simple functions:
$$\rho_k(x)=p_{{i_1}\ldots {i_k}}\chi_ {\Omega_{i_1 \ldots
    i_k}}(x),\quad x \in [0,1],$$ where $\chi_ {\Omega_{i_1\ldots
    i_k}}(x)$ denotes the characteristic function of subsets
$\Omega_{i_1\ldots i_k}$.  The corresponding distribution functions
$F_k(x)$ of $\mu_k$ are continuous piece-wise linear ones increasing
from zero to 1.  Obviously, the sequence $F_k(x), k=1,2,\ldots $ is
point-wise convergent, i.e., there exists a left continuous function
$$F(x)=\lim_{k\rightarrow\infty} {F_k(x)}$$ which defines the
distribution function of the measure $\mu$ on $[0,1]$.
 Moreover, for  continuous $\mu$  this convergence is uniform,
  since all $F_k(x)$ are uniformly bounded.

\section{\bf The  rough  controlled conflicts}

Here we consider several variants of the rough and controlled conflict
interaction in terms of the structural measures. Theorem \ref{3thCnlt}
gives a general result of the mathematical theory of natural
(non-controlled) conflict. However, conflict confrontations in the
real situation occur usually with various deformations of the starting
data.  So, rather often the conflict actions are based on rather rough
estimates of mutual forces, their approximate distributions, and
relations.  That is why there appear rough redistributions of
opponents positions.  We are aimed to study different effects arising
from these reasons and describe a more adequate picture produced by
the approximation method in terms of the rough controlled conflict.
The language of the similar structure measures provides the excellent
tool for this aim.  Simultaneously, some kind of notion of the
controlled conflict appears naturally in this way.

We begin with a short review of the abstract mathematical scheme of
the conflict theory.  Let at the initial moment of time opponents
$A,B$ are distributed along the resource conflicting space $\Omega$ in
according with probability measures $\mu, \nu \in {\cal
  M}^+_1(\Omega)$.  Assume the law of conflict interaction is fixed by
equations (\ref{2drdsc}). Thus, there appears the dynamical system of
conflict $\left\{ \Omega, {\cal M}^+_1(\Omega), \divideontimes
\right\}$.  Note that in general it is not easy to find explicitly
the limit distributions $\mu^\infty, \nu^\infty$ described by Theorem
\ref{3thCnlt}. The reason is that the Hanh--Jordan decomposition
$\Omega=\Omega_+\bigcup \Omega_-$ is non-constructive. It appears as a
result of some approximate procedure (see \cite{DSH}).

Nevertheless, if the conflict space $\Omega$ is divided into a finite
set of regions (see (\ref{1dq})), one can fulfill the rough conflict
"program" using firstly only one step of division:
$\Omega=\bigcup_{i_1=1}^n\Omega_{i_1}$ and changing the starting
measures $\mu,\nu$ into piece-wise uniformly distributed ones
$\mu_1,\nu_1$.  Then one can come to the second more deep step of the
rough "program", and so on.  Importantly, the just described way of
the rough approximation of the conflict interaction is often realized
in applications.

Let us describe this approach in more detail. Let $\mu,\nu \in {\cal
  M}^{\rm ss}(\Omega), \ \ \mu\neq\nu$. Let us consider at the first
step of the rough approximation the piece-wise uniformly distributed
measures $\mu_1,\nu_1$ defined as follows:
   \begin{equation}\label{3str1} \mu_1(\Omega_{i_1})=\mu(\Omega_{i_1})= p_{1i_1},
    \quad \nu_1(\Omega_{i_1})=\nu(\Omega_{i_1})= r_{1i_1}, \quad i_i=1,2,\ldots ,n,
\end{equation}
where $p_{1i_1}, \ r_{1i_1}$ are coordinates of the vectors $p_1, r_1$
(see (\ref{PR})). The Hahn--Jordan decomposition
$\Omega=\Omega_{+,1}\bigcup\Omega_{-,1}$ corresponding to the signed
measure $\omega_1=\mu_1-\nu_1$ has the form
$$\Omega_{+,1}=\bigcup_{i_1\in N_{+,1}} \Omega_{i_1}, \quad
\Omega_{-,1}=\bigcup_{i_1\in N_{-,1}} \Omega_{i_1},$$ where
$$N_{+,1}=\{i_1 \ | \ \mu(\Omega_{i_1}) > \nu(\Omega_{i_1}), \quad
N_{-,1}=\{i_1 \ | \
 \mu(\Omega_{i_1}) < \nu(\Omega_{i_1}) \} \}.$$ We assume for simplicity  that  the regions
$\Omega_{i_1}$ with  $\mu(\Omega_{i_1}) = \nu(\Omega_{i_1})$ are absent.
Thus, due to Theorem \ref{3thCnlt} we have
\begin{equation}\label{4ift}
 \mu^\infty_1(\Omega_{i_1})=  \left\{ \begin{array} {ll}  \frac{p_{1i_1}-r_{1i_1}}{D_1}, \ i_1\in N_{+,1} \\
0, \ i_1\notin N_{+,1}  \end{array} \right. ,  \quad
 \nu^\infty_1(\Omega_{i_1})=  \left\{ \begin{array} {ll}  0, \ i_1\notin N_{-,1} \\
-\frac{p_{1i_1}-r_{1i_1}}{D_1}, \ i_1\in N_{-,1}  \end{array}
\right. ,
\end{equation}
where $D_1=1/2\sum_{i_1} |p_{1i_1}-r_{1i_1}|$.

Let $\mu_1(\Omega_{s})>0$ for some $i_1=s$ such that $s \in
N_{-,1}$. Then $\mu^\infty_1(\Omega_{s})=0$. Thus, the region
$\Omega_{s}$ is played over for the opponent $A$ if the conflict game
occurred at the level of the first rough approximation. This zero
distribution for $\mu^\infty_1$ on $\Omega_{s}$ appears due to the
starting inequality
 $$\mu_1(\Omega_{s})< \nu_1(\Omega_{s}),
 \quad s\in N_{-,1}.$$ Nevertheless, possibly there exists a subset
 $\tilde \Omega_{s}\subset \Omega_{s}$ with the opposite
 inequality $$\mu_1(\tilde \Omega_{s}> \nu_1(\tilde \Omega_{s}).$$

 We are interested in the folowing question. What is the maximal
 Lebesgue measure of such a subset $\tilde \Omega_{s}$ ?  This subset
 can be saved for the opponent $A$ under the next steps of
 approximation with a more thin division of the conflict space or
 under using the controlled redistribution (for the definition see
 below) of the $\mu_1$ inside $ \Omega_{s}$. More precisely, we are
 interested in the following question. What is the biggest (in the
 sense of Lebesgue measure) subset $\tilde \Omega_{s} \subset
 \Omega_{s}$ such that at the second step of approximation, when $
 \Omega_{s}$ is subjected to the division $\Omega_{s}=\tilde
 \Omega_{s}\bigcup \tilde \Omega_{s}^{\rm c}$, for the controlled
 redistribution $\mu_2 \to \tilde \mu_2$, the inequality
$$ \tilde \mu_2(\tilde \Omega_{s})>\nu(\tilde \Omega_{s}) $$
holds?

We define the procedure of the controlled redistribution as follows.
We say that a measure $\mu\in {\cal M}_1^+(\Omega)$ is subjected to
a controlled redistribution along a set $\Omega_s \subset\Omega$, if
it is replaced by the measure $\tilde \mu\in {\cal M}_1^+(\Omega)$
which differs from $\mu$ only inside $\Omega_s$.

One of the effects under the rough approximation with the consequent
controlled redistributions we formulate as follows.
\begin{theorem}\label{4thbig}
  Let $\mu, \nu \in {\cal M}^{\rm ss}(\Omega)$ and $\mu_1, \nu_1$ be
  defined by (\ref{3str1}) at the first step of the rough structural
  approximation. Assume that
$$0< \mu_1(\Omega_{s})< \nu_1(\Omega_{s}), \quad s=i_1 \in N_{-,1}
\ $$
 and therefore $\mu^\infty_1(\Omega_{s})=0, \ \nu^\infty_1(\Omega_{s})>0$.
Let for  the  division
\begin{equation}\label{4div} \Omega_{s}=\tilde \Omega_{s}\bigcup \tilde \Omega_{s}^{\rm c},
\end{equation}
$\tilde \mu_2, \nu_2=\nu_1$ denote the measures at the second step of
the rough structural approximation with the controlled redistribution
of $\mu_2$ obeying the inequalities
\begin{equation}\label{4cdiv} \mu_1(\Omega_{s}) \geq \tilde \mu_2(\tilde \Omega_{s}) >
  \nu_2(\tilde \Omega_{s}).\end{equation}
Then $$ \tilde \mu^\infty_2(\tilde \Omega_{s}) >0, \quad
\nu^\infty_2(\tilde \Omega_{s}) =0$$ and moreover
\begin{equation}\label{4ast}
  \lambda(\tilde \Omega_{s})\leq \sigma_s(\mu,\nu) \lambda(\Omega_{s}) \quad {with}
  \quad
  \sigma_s(\mu,\nu)=
  \frac{\mu_1(\Omega_{s})}{\nu_1(\Omega_{s})}.\end{equation} In the
extremal case, where the value  $\lambda(\tilde \Omega_{s})$ is
maximal,  both limiting distributions   on $\tilde \Omega_{s}$ for
opponents $A, B$ are zero,   $$\tilde \mu_2^\infty(\tilde
\Omega_{s})= \nu_2^\infty(\tilde \Omega_{s})=0.$$
 \end{theorem}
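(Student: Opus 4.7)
The key observation is that $\nu_2=\nu_1$ is piece-wise uniformly distributed on $\Omega_s$ by its construction at the first step of approximation, so for any Borel subset $\tilde\Omega_s \subset \Omega_s$ one has
$$\nu_2(\tilde\Omega_s) = \nu_1(\Omega_s)\,\frac{\lambda(\tilde\Omega_s)}{\lambda(\Omega_s)}.$$
Substituting this identity into the hypothesis chain $\mu_1(\Omega_s) \geq \tilde\mu_2(\tilde\Omega_s) > \nu_2(\tilde\Omega_s)$ and rearranging immediately yields the bound $\lambda(\tilde\Omega_s) \leq \sigma_s(\mu,\nu)\lambda(\Omega_s)$, with equality attained exactly in the saturation case $\tilde\mu_2(\tilde\Omega_s)=\mu_1(\Omega_s)=\nu_2(\tilde\Omega_s)$.

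Next I would apply Theorem \ref{3thCnlt} to the pair $(\tilde\mu_2, \nu_2)$. At the second level of rough approximation both $\tilde\mu_2$ and $\nu_2$ are piece-wise uniformly distributed on $\tilde\Omega_s$, hence the density of the signed measure $\tilde\omega_2=\tilde\mu_2-\nu_2$ with respect to $\lambda$ has constant sign on $\tilde\Omega_s$, determined by $\tilde\mu_2(\tilde\Omega_s)-\nu_2(\tilde\Omega_s)$. The strict inequality in (\ref{4cdiv}) therefore forces $\tilde\Omega_s \subset \Omega_{+,2}$ and $\tilde\Omega_s \cap \Omega_{-,2}=\emptyset$. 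Inserting this into formula (\ref{mr}) of Theorem \ref{3thCnlt} gives
$$\tilde\mu_2^\infty(\tilde\Omega_s)=\frac{\tilde\mu_2(\tilde\Omega_s)-\nu_2(\tilde\Omega_s)}{D_2}>0, \qquad \nu_2^\infty(\tilde\Omega_s)=0,$$
which is the first conclusion.

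For the extremal assertion, when $\lambda(\tilde\Omega_s)=\sigma_s(\mu,\nu)\lambda(\Omega_s)$ the chain collapses to $\tilde\mu_2(\tilde\Omega_s)=\nu_2(\tilde\Omega_s)$, so the restriction of $\tilde\omega_2$ to $\tilde\Omega_s$ vanishes identically; both parts of the Hahn--Jordan decomposition are then zero on $\tilde\Omega_s$, and (\ref{mr}) forces $\tilde\mu_2^\infty(\tilde\Omega_s)=\nu_2^\infty(\tilde\Omega_s)=0$.

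The only genuinely non-bookkeeping step is the constancy-of-sign observation used to place $\tilde\Omega_s$ inside $\Omega_{+,2}$: it relies essentially on arranging the controlled redistribution $\mu_2\to\tilde\mu_2$ to be piece-wise uniform at the refined division so that the Hahn--Jordan decomposition of $\tilde\omega_2$ respects the partition. Once this is in place, the result is a direct application of Theorem \ref{3thCnlt} together with the elementary mass-balance identity for $\nu_2$ on $\Omega_s$.
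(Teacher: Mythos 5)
Your proposal is correct and follows essentially the same route as the paper: both arguments rest on the uniform distribution of $\nu_2=\nu_1$ over $\Omega_s$, i.e.\ the identity $\nu_2(\tilde\Omega_s)=\nu_1(\Omega_s)\lambda(\tilde\Omega_s)/\lambda(\Omega_s)$, combined with the chain (\ref{4cdiv}) to get (\ref{4ast}), and then an application of Theorem \ref{3thCnlt} at the refined division for the limit statements and the degenerate extremal case. Your explicit sign-of-density check placing $\tilde\Omega_s$ inside $\Omega_{+,2}$ only makes precise what the paper leaves to its citation of (\ref{4ift}), so there is no substantive difference.
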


\begin{proof}
  To prove inequality (\ref{4ast}) we will apply the geometrical
  reasoning using the uniform distribution of $\nu_2$ on
  $\Omega_s$. It is easy to see that there exists a non-unique
  division (\ref{4div}) such that $\nu_2(\tilde \Omega_{s})\equiv
  \nu_1(\tilde \Omega_{s}) < \tilde \mu_2(\tilde \Omega_{s})=\mu_1(
  \Omega_{s})$, where we produce a controlled redistribution $\mu_1
  \to \tilde \mu_2$ such that $\tilde \mu_2(\tilde \Omega_{s}^c)=0$
  and $\tilde \mu_2(\tilde \Omega_{s})= \mu_1( \Omega_{s})$. Now the
  estimate (\ref{4ast}) appears by the linear geometrical
  interpolation.  Indeed, take any subset $\tilde \Omega_s\subset
  \Omega_s$ such that $\nu_1(\tilde \Omega_{s})\leq \mu_1(\tilde
  \Omega_{s})$ and put ${\tilde \mu}_2(\tilde
  \Omega_{s})=\mu_1(\Omega_s), \ \ {\tilde \mu}_2(\tilde
  \Omega_{s}^c)=0$. Denote
  $\sigma_{s}=\sigma_{s}(\nu,\lambda):=\nu_1(\Omega_s)/\lambda(\Omega_s)$.
  Then obviously
   $$\nu_2(\tilde \Omega_{s})=\nu_1(\tilde \Omega_{s})=\sigma_{s}\lambda(\tilde \Omega_s)\leq
   \tilde \mu(\tilde \Omega_s)=\mu_1(\Omega_s).$$
   Therefore
  $$\lambda(\tilde \Omega_{s})\leq \mu(\Omega_s)/\sigma_{s}
  =\frac{\mu_1(\Omega_{s})}{\nu_1(\Omega_{s})} \lambda(\Omega_{s}).$$
 This proves (\ref{4ast}).

 Clearly, in (\ref{4ast}) we have the equality iff $\nu_2(\tilde
 \Omega_{s})= \mu(\Omega_{s})= \tilde \mu_2(\tilde \Omega_{s})$.  In
 this extremal case
 \begin{equation}\label{4sup} \sup \lambda(\tilde \Omega_{s})
   =\frac{\mu_1(\Omega_{s})}{\nu_1(\Omega_{s})} \lambda(\Omega_{s}),\end{equation}
 where  the supremum  is taken over all divisions (\ref{4div}) satisfying
 conditions (\ref{4cdiv}). Then both sequences
 $\tilde \mu_2^N(\tilde \Omega_{s}), \  \nu_1^N(\tilde \Omega_{s}), \ N=1,2,...$
 converge  to zero due to  (\ref{4ift}) (see the vector version of Theorem \ref{3thCnlt}).
\end{proof}

Therefore, if the opponent $A$ associated with a measure $\mu$ at the
first step of the structural approximation looses some region
$\Omega_s$, i.e., $\mu^\infty_1(\Omega_{s})=0$, then at the second
step of structural approximation (under an additional division of
$\Omega_s$) using the controlled redistribution inside $\Omega_s$ it
can return a part $\tilde \Omega_{s}$ of this region. The size of the
returned subset (in the sense Lebesgue measure is estimated by
(\ref{4ast})). In turn, the opponent $B$ wins the whole region
$\Omega_s$ at the first step, however at the second step it gets zero
inside $\tilde \Omega_s$ since it does not produce any preserving
actions: $\nu_2^\infty(\tilde \Omega_{s}) =0, \ \ \tilde
\mu^\infty_2(\tilde \Omega_{s}) >0.$

It is not easy to generalize Theorem \ref{4thbig} to the case of
arbitrary similar structure measures since the local densities
$\sigma_{i_1\ldots i_k}: =\nu(\Omega_{i_1\ldots
  i_k})/\lambda(\Omega_{i_1\ldots i_k})$ take in general any
values. That is why it is so hard to predict in applications the
evolution of space redistributions when both opponents use different
individual strategies under the controlled conflict interactions.

Let us consider a couple of examples.

\begin{example}\label{4Ex2} From losses to gains, or an expansion on a
  new territory.  \end{example} Let
$\Omega=\bigcup_{i=1}^3\Omega_{1_i}, \ \lambda(\Omega_{1_i})=1/3$. Let
us put in correspondence to opponents $A,B$ the measures $\mu_1,
\nu_1$ such that vectors $p_1=\{\mu_1(\Omega_{i_1})\}, r_1=
\{\nu_1(\Omega_{i_1})\}$ from ${\mathbb R}^3_{+,1}$ have the following
coordinates: $$p_{i_1}=1/3, \quad i_1=1,2,3, \quad
r_1=(2-\varepsilon)/9,\quad r_2=(1+\varepsilon)/3,\quad r_3=4/9, \quad
\varepsilon>0.$$ Then, by using Theorem \ref{3thCnlt}, we find by
direct calculation that
 $$\Omega_-=\Omega_3, \quad
 \nu_1^\infty(\Omega_-)=1, \quad \lambda(\Omega_-)=1/3.$$ In
 particular, the opponent $A$ has a priority for two regions
 $\Omega_+=\Omega_1 \bigcup \Omega_2$ with $\lambda(\Omega_+)=2/3$.
 However, at the second step of partition,
 $\Omega=\bigcup_{i_1,i_2=1}^3 \Omega_{i_1i_2}$, the measures $\mu_2,
 \nu_2$ have new, different signs of their priorities and therefore the
 Hanh--Jordan decomposition is changed $$\Omega_-=
 \Omega_{22}\bigcup\Omega_{23}\bigcup\Omega_{32}\bigcup\Omega_{33},
 \quad \lambda(\Omega_-)=4/9. $$ Thus, $\nu^\infty_2(\Omega_-)=1$ and
 the area of priority for the opponent $B$ becomes larger. We can go
 to the next step of approximation and put $$
 r_1=(9-3\varepsilon)/27,\quad r_2=(9+\varepsilon)/27, \quad
 r_3=(9+2\varepsilon)/27.  $$ It leads to a greater extension of the
 area of priority for $B$.

 This example shows that the strategy of the directed priority:
 $r_1<r_2<r_3$ in comparison with the strategy of uniform distribution
 $p_i=1/3$ leads to an extension of the occupation area under the
 conflict interaction on the way of increasing steps of the structural
 approximation.

 \begin{example}\label{4Ex} Spectral gaps  as a result of  conflict
   interactions. \end{example} Let $\mu, \nu\in {\cal M}^{\rm
   sss}(\Omega)$. Assume that at the first step of the rough
 approximation the measures $\mu_1, \nu_1$ are presented by vectors
 $p_1,r_1\in {\mathbb R}^n_{+,1}, \ n\geq 3$ with the coordinates
$$p_{i_1}=1/n, \quad i_1=1,\ldots ,n, \quad r_s=(n-1)/n, \quad
r_{i_1\neq s}=1/(n(n-1)), \quad  1\leq s\leq n.$$ By this
assumption, all $p_{i_1}>r_{i_1}, i_1\neq s$, \  and $p_s<r_s$.
Thus,
 $D_1=r_s-p_s= (n-2)/n$ and
$$p_{i_1}^\infty=1/(n-1),  \quad r_{i_1}^\infty=0, \quad {\rm if}
\quad i_1\neq s, \quad
  {\rm and}
\quad p_s=0, \quad r_s= 1.$$ Therefore, the opponent $A$ loses the
region $\Omega_s$ and  $B$ wins this region  with probability 1.
The spectral support of $\mu^\infty$ coincides with $
\Omega_s^c=\Omega\backslash \Omega_s$.

Let us come to the second step of the structural approximation.  Then
we get
$$p_{i_1i_2}=1/n^2 \quad {\rm for \  all} \quad i_1,i_2=1,\ldots
,n$$ and $$ r_{i_1i_2}=1/(n-1)n, \quad  {\rm if \  both} \quad
i_1, i_2 \neq s,$$ $$r_{i_1i_2}=1/n^2, \quad {\rm if \  only \
one} \quad i_1 \ {\rm or} \   i_2 \neq s, $$ $$ r_{ss}=
((n-1)/n)^2.$$ Now $D_2=D_1=1-2/n$, \
$p_{i_1i_2}^\infty=1/(n-1)^2$,
 \ $r^\infty_{ss}=1,$ and $ r^\infty_{i_1i_2}=0, \ {\rm for \ \ the \
 rest} \ i_1, i_2.$ In particular,
  $$p^\infty_{i_1s}=p^\infty_{si_2}=r^\infty_{si_2}=r^\infty_{i_1s}=0, \quad i_1\neq s\neq i_2.$$
  Thus, at the second step of the rough approximation we observe gaps
  in the regions $\Omega_{i_1s}, \Omega_{si_2} \subset \Omega_{i_1}, \
  i_1\neq s$ for the opponent $A$ and gaps for the opponent $B$ in the
  regions $\Omega_{si_2} \subset \Omega_{s}, \ i_2\neq s$. If we will
  continue the rough approximation to the third step,  similar gaps
  appear for the opponent $B$ in regions $\Omega_{ssi_3} \subset
  \Omega_{ss}, \ i_3\neq s$.  And so on.

The general result   in this direction reads as follows.

\begin{theorem}\label{4sss} Let $\mu, \nu\in {\cal M}^{\rm sss}(\Omega)$.
  Assume $\mu\neq\nu$ and let $\Omega_s$ be such that
$$0< \mu_1(\Omega_{s})< \nu_1(\Omega_{s}), \ \ s\in N_{-,1},$$
 and therefore  $\mu^\infty_1(\Omega_{s})=0, \ \nu^\infty_1(\Omega_{s})>0$. Then
 with necessity  there exist  subsets $\Omega_{si_2\ldots i_k}\subset\Omega_{s}$ such that the
 opposite inequality holds
$$ \mu_k(\Omega_{si_2\ldots i_k})> \nu_k(\Omega_{si_2\ldots
i_k})\geq 0, \quad si_2\ldots i_k \in N_{+,k}$$
 and therefore $\mu_k^\infty(\Omega_{si_2\ldots i_k})>0, \ \nu^\infty_k(\Omega_{si_2\ldots i_k})=0$,
 where
$$N_{+,k}:=\{i_1i_2\ldots i_k|p_{i_1i_2\ldots i_k}>r_{i_1i_2\ldots
i_k}\}.$$
\end{theorem}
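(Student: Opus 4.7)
The plan is to exploit the product structure inherent in self-similar measures: since $\mu,\nu\in\mathcal{M}^{\rm sss}(\Omega)$, by Lemma \ref{2asm} they are determined by single stochastic vectors $p,r\in\mathbb{R}^{n}_{+,1}$, independent of the level $k$. Consequently, for any multi-index,
\begin{equation*}
\mu_k(\Omega_{i_1\ldots i_k})=p_{i_1}p_{i_2}\cdots p_{i_k},\qquad
\nu_k(\Omega_{i_1\ldots i_k})=r_{i_1}r_{i_2}\cdots r_{i_k}.
\end{equation*}
In particular, pinning the first index to $s$, the desired inequality
$\mu_k(\Omega_{si_2\ldots i_k})>\nu_k(\Omega_{si_2\ldots i_k})$
is equivalent to
\begin{equation*}
\prod_{j=2}^{k}\frac{p_{i_j}}{r_{i_j}}\;>\;\frac{r_s}{p_s}\;>\;1,
\end{equation*}
with the convention that a ratio $p_{i_j}/r_{i_j}$ is $+\infty$ when $r_{i_j}=0<p_{i_j}$.

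Next I would produce a suitable coordinate $j^{\ast}$ driving the product above to exceed $r_s/p_s$. Because $p,r$ are stochastic, $\sum_i p_i=\sum_i r_i=1$, and the hypothesis $p_s<r_s$ forces at least one index $j^{\ast}\neq s$ with $p_{j^{\ast}}>r_{j^{\ast}}$. I would split into two cases. If $r_{j^{\ast}}=0$ while $p_{j^{\ast}}>0$, then taking $i_2=\cdots=i_k=j^{\ast}$ for any $k\ge 2$ yields $\mu_k(\Omega_{sj^{\ast}\ldots j^{\ast}})=p_s p_{j^{\ast}}^{k-1}>0=\nu_k(\Omega_{sj^{\ast}\ldots j^{\ast}})$, and we are done. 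Otherwise $p_{j^{\ast}}/r_{j^{\ast}}=:\alpha>1$, and choosing $i_2=\cdots=i_k=j^{\ast}$ again gives
\begin{equation*}
\frac{\mu_k(\Omega_{sj^{\ast}\ldots j^{\ast}})}{\nu_k(\Omega_{sj^{\ast}\ldots j^{\ast}})}
=\frac{p_s}{r_s}\,\alpha^{k-1},
\end{equation*}
which exceeds $1$ for all $k$ large enough; the corresponding multi-index then belongs to $N_{+,k}$.

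Finally I would remark that by Theorem \ref{3thCnlt} applied at level $k$ (in its vector form \eqref{lfstv}), membership in $N_{+,k}$ immediately produces $\mu^{\infty}_k(\Omega_{si_2\ldots i_k})>0$ and $\nu^{\infty}_k(\Omega_{si_2\ldots i_k})=0$, matching the statement. Iterating the construction inside $\Omega_s$ shows that spectral gaps for $\nu^\infty_k$ appear at every sufficiently deep level, explaining the phenomenon already observed in Example \ref{4Ex}.

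The only genuine subtlety, and the reason the analogous statement for general $\mu,\nu\in\mathcal{M}^{\rm ss}$ is harder (as the authors remark), is the degenerate scenario $r_{j^{\ast}}=0$: in the self-similar case this is still easy, but without self-similarity the ratios $p_{k i_k}/r_{k i_k}$ vary with $k$ and one cannot simply amplify a single favorable coordinate by iteration. Apart from that, the argument is a direct multiplicative comparison using the factorization afforded by self-similarity, and I would expect the proof to fit in a short paragraph.
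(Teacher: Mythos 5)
Your argument is correct and follows essentially the same route as the paper's own proof: locate an index $j^{\ast}\neq s$ with $p_{j^{\ast}}>r_{j^{\ast}}$ (guaranteed since both vectors are stochastic and $p_s<r_s$), split into the cases $r_{j^{\ast}}=0$ and $r_{j^{\ast}}>0$, and in the latter amplify the ratio $p_{j^{\ast}}/r_{j^{\ast}}>1$ by repeating that index until the product overcomes $r_s/p_s$, then invoke Theorem \ref{3thCnlt}. If anything, your write-up is slightly cleaner than the paper's, which contains the typo $(p_m/r_m)^k>p_s/r_s$ where $r_s/p_s$ is meant.
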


\begin{proof}
  Since $\mu\neq\nu$ and $0<p_s<r_s$ there exists $m\neq s$\ such that
  $p_m>r_m\geq 0$.  If $r_m=0$, then $p_{sm}=p_s\cdot
  p_m>r_{sm}=r_s\cdot r_m=0$.  Therefore
  $\mu_2(\Omega_{sm})>\nu_2(\Omega_{sm})$ and
  $\mu_2^\infty(\Omega_{sm})>0, \ \nu_2^\infty(\Omega_{sm})=0$. In
  this case the theorem is proved.

If  $p_m>r_m\geq 0$ and $r_m\neq0$, then $p_m/r_m>1$ and $(p_m/r_m)^k\to \infty$,  with $ k\to \infty$.
Thus   $(p_m/r_m)^k> p_s/r_s$ and
 $p_s\cdot (p_m)^k >r_s\cdot (r_m)^k$  for some finite $k$. Therefore, if for all
  $i_2=m,\,\ldots ,\,i_k=m$ the conditions
 $p_{si_2\ldots i_k}>r_{si_1\ldots i_k}>0$ hold, then $si_2\ldots i_k\in N_{+,k}$,
 and  by Theorem \ref{3thCnlt} we get
$$p_{sm\ldots m}^\infty=\mu_k^\infty(\Omega_{si_2\ldots i_k})>0,
\quad  r_{sm\ldots m}^\infty=\nu_k^\infty(\Omega_{si_2\ldots
i_k})=0,  \quad  i_2=m,\quad \ldots ,\quad i_k=m.  $$
\end{proof}

We are able to estimate the maximal value of
$\mu_k^\infty(\Omega_{si_2\ldots i_k})>0$ in Theorem  \ref{4sss}.

\begin{prop}
   $$\max\limits _k \mu_k^\infty(\Omega_{si_2\ldots i_k})\leq p_s.$$
\end{prop}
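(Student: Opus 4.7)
The plan is to reduce the bound on the specific cell $\Omega_{si_2\ldots i_k}$ to a bound on the whole region $\Omega_s$, and then exploit the self-similar factorization together with monotonicity of total variation under refinement of the partition. First I would observe that since $\Omega_{si_2\ldots i_k}\subset\Omega_s$ and $\mu_k^\infty$ is a positive measure, $\mu_k^\infty(\Omega_{si_2\ldots i_k})\leq \mu_k^\infty(\Omega_s)$; it therefore suffices to prove $\mu_k^\infty(\Omega_s)\leq p_s$ for every $k$.

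By Theorem \ref{3thCnlt} (formula (\ref{mr})), $\mu_k^\infty(E)=\omega_{k,+}(E)/D_k$, where $\omega_k=\mu_k-\nu_k$, $\omega_{k,+}$ is the positive Hahn--Jordan component of $\omega_k$, and $D_k=\omega_{k,+}(\Omega)$. Writing a generic sub-cell of $\Omega_s$ at level $k$ as $\Omega_{st}$ with $t=(i_2,\ldots,i_k)$ and using the self-similar factorization $\mu_k(\Omega_{st})=p_s p_t$, $\nu_k(\Omega_{st})=r_s r_t$, I would compute
\[
\omega_{k,+}(\Omega_s)=\sum_{t\,:\,p_s p_t>r_s r_t}(p_s p_t-r_s r_t)=p_s\sum_{t}(p_t-c\,r_t)_{+},
\]
with $c:=r_s/p_s>1$ (since $s\in N_{-,1}$). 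Because $c>1$ gives $(p_t-cr_t)_{+}\leq(p_t-r_t)_{+}$ term by term, the right-hand sum is bounded by $D_{k-1}:=\sum_t(p_t-r_t)_+$, the analogous total positive difference computed at the coarser level $k-1$.

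The remaining ingredient is monotonicity $D_{k-1}\leq D_k$ under refinement, which I would derive from the elementary inequality $\sum_{j=1}^n(p_j a-r_j b)_{+}\geq(a-b)_+$ valid for any $a,b\geq0$ (an immediate consequence of $\sum_j|p_j a-r_j b|\geq|a-b|$), applied with $a=p_t$, $b=r_t$ and summed over all level-$(k-1)$ multi-indices. Combining these estimates yields $\omega_{k,+}(\Omega_s)\leq p_s\,D_{k-1}\leq p_s\,D_k$, so $\mu_k^\infty(\Omega_s)\leq p_s$ and the proposition follows. The only non-mechanical step is recognizing that self-similarity allows one to factor out $p_s$ and recast the estimate as a weighted total-variation comparison with scaling factor $c>1$; the cell-wise inequality and the refinement monotonicity are standard.
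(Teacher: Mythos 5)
Your proof is correct and follows essentially the same route as the paper: you factor $p_s$ out of the positive part of $\mu_k-\nu_k$ on $\Omega_s$ (your $p_s(p_t-c\,r_t)$ with $c=r_s/p_s>1$ is algebraically the paper's $p_s(p_t-r_t)-(r_s-p_s)r_t$ with the nonnegative term dropped), bound the result by $p_sD_{k-1}$, and finish with the monotonicity $D_{k-1}\le D_k$, which is exactly the paper's Lemma~\ref{4Dgr} and which you re-derive by the same stochastic-vector argument. The only (welcome) difference is that you state explicitly the reduction $\mu_k^\infty(\Omega_{si_2\ldots i_k})\le\mu_k^\infty(\Omega_s)$, which the paper leaves implicit in its notation.
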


\begin{proof}
By Theorem \ref{3thCnlt},
$$
\begin{aligned}
\mu_k^\infty(\Omega_{si_2\ldots i_k}) & =1/D_k \sum\limits_{s{i_2}
\ldots {i_k}\in{N_{+,k}}}(p_{s{i_2}\ldots {i_k}}-r_{s{i_2}\ldots
{i_k}})
\\
& =1/D_k \sum\limits_{s{i_2} \ldots
{i_k}\in{N_{+,k}}}(p_s(p_{i_2}\cdots p_{i_k}-r_{i_2}\cdots
r_{i_k}) -(r_s-p_s)\cdot r_{i_2}\cdots r_{i_k})
\\
 & \leq p_s/D_k
\sum\limits_{{i_2} \ldots {i_k}\in{N_{+,k-1}}}(p_{{i_2}\ldots
{i_k}}-r_{{i_2}\ldots {i_k}}) \leq p_s/D_k \cdot D_{k-1}\leq
p_s,
\end{aligned}
$$
where we used the following lemma.
\end{proof}

\begin{lemma}\label{4Dgr} For any couple of measures $\mu,\nu \in {\cal M}^{\rm ss}(\Omega),
\ \mu\neq\nu$ the variation distance $D_k=D(\mu_k,\nu_k)$ between their $k$th structural
approximated variants $\mu_k,\nu_k$ creates a non-decreasing
sequence
$$D_k\leq D_{k+1}, \quad k\geq 1.$$
\end{lemma}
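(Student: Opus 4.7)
The plan is to reduce the claim to the elementary fact that the $\ell^{1}$-norm of a finite signed measure, evaluated over a partition of $\Omega$, is non-decreasing under refinement of the partition. Since $\mu_k,\nu_k$ are piece-wise uniformly distributed with respect to $\lambda$ on the cells $\Omega_{i_1\ldots i_k}$, I would first write
$$D_k=\tfrac12\int_\Omega|\rho_k(x)-\sigma_k(x)|\,d\lambda(x)=\tfrac12\sum_{i_1,\ldots,i_k=1}^{n}|\,p_{i_1\ldots i_k}-r_{i_1\ldots i_k}\,|,$$
using the explicit form $\rho_k=p_{i_1\ldots i_k}/q_{i_1\ldots i_k}$ on $\Omega_{i_1\ldots i_k}$ given in the proof of Lemma~\ref{2asm}, so that $|\rho_k-\sigma_k|\cdot\lambda(\Omega_{i_1\ldots i_k})=|p_{i_1\ldots i_k}-r_{i_1\ldots i_k}|$.

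Next I would exploit the key consistency relation inherent in the similar-structure construction: since the vectors $p_{k+1}$ and $r_{k+1}$ are stochastic, one has
$$p_{i_1\ldots i_k}=\sum_{i_{k+1}=1}^{n}p_{i_1\ldots i_k i_{k+1}},\qquad r_{i_1\ldots i_k}=\sum_{i_{k+1}=1}^{n}r_{i_1\ldots i_k i_{k+1}}.$$
Applying the triangle inequality to the difference yields, for every fixed $(i_1,\ldots,i_k)$,
$$|\,p_{i_1\ldots i_k}-r_{i_1\ldots i_k}\,|=\Bigl|\sum_{i_{k+1}}\bigl(p_{i_1\ldots i_k i_{k+1}}-r_{i_1\ldots i_k i_{k+1}}\bigr)\Bigr|\leq\sum_{i_{k+1}}|\,p_{i_1\ldots i_k i_{k+1}}-r_{i_1\ldots i_k i_{k+1}}\,|.$$

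Summing this over $(i_1,\ldots,i_k)$ and multiplying by $\tfrac12$ gives precisely $D_k\le D_{k+1}$, completing the argument. There is really no obstacle here beyond carefully identifying $D_k$ with the $\ell^{1}$-sum over the $k$-th level cells; the monotonicity itself is just the standard fact that refining a partition cannot decrease the total variation of a signed measure. The same reasoning shows moreover that $D_k\uparrow D(\mu,\nu)$ by the martingale-type convergence of conditional expectations $\rho_k\to\rho$ in $L^{1}(\lambda)$, although only the monotonicity is needed for the application in the proof of the preceding proposition.
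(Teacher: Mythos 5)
Your argument is correct and is essentially identical to the paper's own proof: the paper also inserts the partition of unity $1=\sum_{i_{k+1}}p_{(k+1)i_{k+1}}=\sum_{i_{k+1}}r_{(k+1)i_{k+1}}$ into each term $|p_{i_1\ldots i_k}-r_{i_1\ldots i_k}|$, which is exactly your consistency relation $p_{i_1\ldots i_k}=\sum_{i_{k+1}}p_{i_1\ldots i_ki_{k+1}}$, and then applies the triangle inequality and sums. The extra remarks you add (the integral representation of $D_k$ and the convergence $D_k\uparrow D(\mu,\nu)$) are correct but not needed for the lemma.
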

\begin{proof}
By definition,
$D_k=1/2 \sum\limits_{{i_1}{i_2}\ldots {i_k}=1}^{n}|p_{{i_1}{i_2}\ldots {i_k}}-r_{{i_1}{i_2}\ldots {i_k}}|$.
Since $$1=p_{(k+1)i_1}+\cdots +p_{(k+1)i_n}=r_{(k+1)i_1}+\cdots +r_{(k+1)i_n}$$ we have
$$D_k=1/2\sum\limits_{{i_1}\ldots {i_k}=1}^{n}|p_{{i_1}\ldots {i_k}}(p_{(k+1)i_1}+\cdots +p_{(k+1)i_n})
-r_{{i_1}\ldots {i_k}}(r_{(k+1)i_1}+\cdots +r_{(k+1)i_n})|$$
$$\leq 1/2\sum \limits_{{i_1}\ldots {i_{k+1}}=1}^{n}|p_{{i_1}\ldots {i_{k+1}}}-r_{{i_1}\ldots {i_{k+1}}}|=D_{k+1}$$
\end{proof}

\begin{theorem}\label{6thg}
  Given a couple of the similar structure measures $\mu,\nu \in {\cal
    M}^{\rm ss}(\Omega)$ assume $\Omega_{i_1\ldots i_k}=(1/n)^k,
  \forall k$ and
 \begin{equation}\label{1cond}
{\rm supp} \mu=\Omega={\rm supp} \nu. \end{equation}
Then for any $0<\varepsilon <1 $ there exist controlled
structural redistributions of the starting measure $\mu$ such that on
$k$th step, \ $ 1\leq k < \infty$, of the rough controlled
approximations $\mu_k \to \tilde \mu_k, \ \nu_{k} \to \nu_{k}$ the
limit conflict state $\{ \tilde \mu_{k}^\infty, \nu_{k}^\infty \}$
obeys the properties
 \begin{equation}\label{apr}
 {\rm supp} {\tilde \mu}_{k}^\infty=\Omega_{k,+}, \quad \lambda(\Omega_{k,+})\geq 1-\varepsilon,
 \quad
{\rm supp} { \nu}_{k}^\infty=\Omega_{k,-}, \quad \lambda(\Omega_{k,-}) \leq \varepsilon,
\end{equation}
where
$$\Omega= \Omega_{k,+}\bigcup \Omega_{k,-}$$ denotes  the Hahn--Jordan decomposition corresponding
to the
signed measure $\tilde \omega_k=\tilde \mu_k-\nu_k$.
\end{theorem}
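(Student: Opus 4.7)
The plan is to reduce everything to Theorem \ref{3thCnlt}: once the signed measure $\tilde\omega_k=\tilde\mu_k-\nu_k$ is pinned down, the supports of $\tilde\mu_k^\infty$ and $\nu_k^\infty$ coincide, respectively, with the positive and negative sets of its Hahn--Jordan decomposition. It therefore suffices to engineer a controlled redistribution of $\mu$ for which $\tilde\mu_k>\nu_k$ holds on a union of level-$k$ cells of total Lebesgue measure at least $1-\varepsilon$.

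First I would cash in the support hypothesis. Because $\nu\in {\cal M}^{\rm ss}(\Omega)$ has $\nu(\Omega_{i_1\ldots i_k})=r_{1i_1}\cdots r_{ki_k}$ by Lemma \ref{2asm}, vanishing of any conditional probability $r_{j,i_j}$ would force a whole block of cells (of positive Lebesgue measure and nonempty interior under the uniform division) to have $\nu$-mass zero, contradicting ${\rm supp}\,\nu=\Omega$. Hence $r_{j,i_j}>0$ for every $j$ and $i_j$, every level-$k$ cell carries strictly positive $\nu_k$-mass, and in particular every single cell satisfies $\nu_k(\Omega_\alpha)<1$.

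Given $\varepsilon\in(0,1)$, I would then fix $k$ with $(1/n)^k\leq\varepsilon$ and designate one ``sacrificed'' cell $\Omega_0=\Omega_{i_1^0\ldots i_k^0}$. The redistribution is built so that $\tilde\mu_k$ is piece-wise uniformly distributed at level $k$ with cell masses
$$\tilde\mu_k(\Omega_0)=0,\qquad \tilde\mu_k(\Omega_\alpha)=\nu_k(\Omega_\alpha)+c_\alpha\quad(\alpha\neq 0),$$
where the $c_\alpha>0$ are chosen with $\sum_{\alpha\neq 0}c_\alpha=\nu_k(\Omega_0)$. The previous paragraph guarantees $\nu_k(\Omega_0)>0$, so such $c_\alpha$ exist and $\tilde\mu_k$ is a genuine probability measure strictly dominating $\nu_k$ on every level-$k$ cell except $\Omega_0$. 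The Hahn--Jordan decomposition of $\tilde\omega_k$ is consequently $\Omega_{k,+}=\Omega\setminus\Omega_0$ and $\Omega_{k,-}=\Omega_0$, with Lebesgue measures $1-n^{-k}\geq 1-\varepsilon$ and $n^{-k}\leq\varepsilon$, and Theorem \ref{3thCnlt} delivers the claimed supports for the limit state.

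The delicate point, and the place I expect the main obstacle, is not the measure-theoretic construction but the justification that the above $\tilde\mu_k$ is produced by a \emph{controlled structural redistribution} of the starting measure $\mu$ as defined in Section 4, rather than by an arbitrary replacement. Since a controlled redistribution along $\Omega_s$ permits arbitrary reassignment of mass inside $\Omega_s$, and successive controlled redistributions are admitted, one may realise the transition $\mu\to\tilde\mu$ either by a single controlled redistribution along all of $\Omega$ (with $\tilde\mu:=\tilde\mu_k$ itself serving as the already-approximated output) or, more in the spirit of Theorem \ref{4thbig}, by performing cell-wise redistributions in the negative cells of each lower level up to level $k$, transporting $\mu$-mass out of $\Omega_0$ and redepositing it on the remaining cells in the required proportions.
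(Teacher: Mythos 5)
Your proposal is correct, and it reaches the conclusion by a genuinely more direct construction than the paper's. Both arguments share the same skeleton: reduce to Theorem \ref{3thCnlt}, and arrange the redistribution so that the Hahn--Jordan negative set of $\tilde\omega_k=\tilde\mu_k-\nu_k$ is a single level-$k$ cell of Lebesgue measure $n^{-k}\leq\varepsilon$; both also use the support hypothesis (\ref{1cond}) only to guarantee that all the conditional probabilities $r_{ji_j}$ are strictly positive. The difference is in how the sacrificed cell is produced. The paper first normalizes to the situation (\ref{4dc}) where a single index $s$ is ``losing'' at every level, and then perturbs the stochastic vector $p_1$ (replacing $p_{1i_1}$ by $r_{1i_1}+\delta/(n-1)$ for $i_1\neq s$ and $p_{1s}$ by $r_{1s}-\delta$, with $\delta$ tuned so that the sign pattern at level $2$ flips everywhere except on $\Omega_{ss}$), iterating this to push the losing region down to the nested cell $\Omega_{s\cdots s}$; this keeps $\tilde\mu$ inside the similar-structure class ${\cal M}^{\rm ss}(\Omega)$ and exhibits the redistribution as a modification of the matrix $P$ of Lemma \ref{2asm}, at the cost of a rather delicate (and in the paper only sketched) choice of $\delta$ and the unexplained reduction to a single losing index $s$. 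You instead jump straight to level $k$ with $n^{-k}\leq\varepsilon$ and prescribe the piecewise-uniform cell masses $\tilde\mu_k(\Omega_0)=0$, $\tilde\mu_k(\Omega_\alpha)=\nu_k(\Omega_\alpha)+c_\alpha$ outright; this is cleaner and avoids both the normalization (\ref{4dc}) and the $\delta$-tuning, but the resulting $\tilde\mu_k$ need not be of product (similar-structure) form, so it is ``structural'' only in the weaker sense of being level-$k$ piecewise uniform. Your worry about whether a global reassignment qualifies as a controlled redistribution is not an obstacle in the paper's own terms: the paper's proof likewise alters $p_{1i_1}$ for every $i_1$, i.e.\ performs a redistribution along all of $\Omega$, so your construction is admissible on exactly the same footing.
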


\begin{proof}
Without loss of generality we put $q_{ki}=1/n$ for all $k,i$
and assume that all $p_{ki_k}\neq 0 \neq r_{ki_k}$.
Moreover, we can assume that for some single  $1\leq s \leq n$
\begin{equation}\label{4dc}
p_{ks}< r_{ks} \quad {\rm and} \quad
p_{ki_k}> r_{ki_k}, \quad  i_k \neq s, \quad \forall  k\geq 1.\end{equation}
 Then at the first step of the
 rough approximation we have
 $${\rm supp} \mu^\infty_1=\Omega_{1,+}=\bigcup_{i_1\neq s} \Omega_{i_1}, \
 \ \lambda(\Omega_{i_1})=1-1/n.$$ So, the theorem is proved if $1/n <
 \varepsilon$. Assume $(1/n)^2 < \varepsilon < 1/n$.  Show that
 (\ref{apr}) may be reached at the second step of the rough controlled
 approximation.  Indeed, replace all $p_{1i_1}, \ i_i\neq s$ with
 $\tilde p_{1i_1} =r_{1i_1}+\delta/(n-1)$ where $\delta$ is chosen so
 that $\tilde p_{1s}= r_{1s}-\delta$ satisfies the conditions
$$(n-1) n^{-1} r_{1s} <  \tilde p_{1s} < r_{1s}$$
and
$$\tilde p_{1s} \tilde p_{2i_2}>r_{1s}r_{2i_2}, \quad  i_2\neq s,
\quad
\tilde p_{1s} \tilde p_{2s}< r_{1s}r_{2s}.$$
Then for the controlled conflict  with the division
$$\Omega=\bigcup_{i_1\neq s}\Omega_{i_1}\bigcup_{i_2=1}^n\Omega_{si_2}$$
for $\tilde \mu_2, \nu_2$ we obtain
$${\rm supp} (\tilde \mu^\infty_2)=\Omega_{2,+}=\bigcup_{i_1\neq s}
\Omega_{i_1}\bigcup_{i_2\neq s}\Omega_{si_2},
 \quad \lambda(\Omega_{2,+})=1-(1/n)^2> 1-\varepsilon,$$
 $${\rm supp} (\nu^\infty_2)=\Omega_{2,-}=
 \Omega_{ss}, \quad \lambda(\Omega_{ss})=(1/n)^2<\varepsilon.$$ If
 $(1/n)^3 < \varepsilon < (1/n)^2$, then one can reach (\ref{apr}) at
 $k=3$ step of the rough controlled approximation with an appropriate
 $\delta$. And so on.
\end{proof}

\section{\bf Discussion}

Let us discuss some interpretation of the above results from the point
of view of possible applications.

We recall that for models of DSC which describe natural conflicts its
trajectories are governed by some law of conflict interaction which is
independent of the time.  In this case each limit state
$\{\mu^\infty,\nu^\infty \}$ of the system is a fixed point (an
equilibrium state) defined by the starting couple of measures $\mu,
\nu \in {\cal M}^{+}_1(\Omega)$ \ (see Theorems \ref{3thCnlt}).

In other situation when the law of conflict interaction may be changed
at any moment of time, we deal with the {\it controlled conflict}. In
other words, such changes mean the choice of the local strategies.  In
the present paper we discussed only the simplest versions of models
with the controlled conflict.  They were reduced to redistributions of
the starting measures $\mu, \nu \in {\cal M}^{\rm ss}(\Omega)$ at
$k$th steps of their structural approximations and mean changes of the
vectors $p_l, \ l \leq k$ in the matrix $P$ (see Lemma \ref{2asm}) to
other vectors ${\tilde p}_l \in {\mathbb R}^n_{+,1}$. These changes
where aimed to get the new limit states $\{\tilde \mu^\infty,\tilde
\nu^\infty \}$ different from the ones in the case of natural
conflict.

In Section 4 it was shown that the limit result of the natural
conflict may be essentially changed. So, in the situation of complete
defeat for the opponent $A$ in a region $\Omega_s$, when from the
condition $0<\mu(\Omega_s)<\nu(\Omega_s)$ it follows that
$\mu^\infty(\Omega_s)=0$, i.e., there appears a limit gap, one can
produce some redistribution of $\mu$ inside $\Omega_s$ in such a way
that after the controlled conflict interaction, $A$ reaches a victory
in a subregion $\tilde \Omega\subset \Omega_s$, i.e., ${\tilde
  \mu}^\infty({\tilde \Omega})>0$. Moreover, in Theorem \ref{4thbig}
we got estimates both for the value of ${\tilde \mu}^\infty({\tilde
  \Omega})$ and for the size of area with a limit priority, i.e., for
$\lambda({\tilde \Omega})$.  Clearly, these estimates depend on the
structure of divisions (\ref{1dq}) and the values of
redistributions. Theorem \ref{6thg} shows that under an appropriate
redistribution for one of the starting measure, the limit result of
the controlled conflict might be very successful for one opponent and
extremely bad for the other (it looses almost whole territory, see
(\ref{apr})).

\end{document}